\theoremstyle{plain}
\newtheorem{theorem}{Theorem}
\newtheorem{lemma}[theorem]{Lemma}
\newtheorem{claim}[theorem]{Claim}
\theoremstyle{definition}
\newtheorem{remark}[theorem]{Remark}
\newcommand{\R}{\mathbb{R}}
\newcommand{\Q}{\mathbb{Q}}
\newcommand{\N}{\mathbb{N}}
\DeclareMathOperator{\osc}{osc}
\DeclareMathOperator{\diam}{diam}
\providecommand{\restriction}{|}
\begin{document}

\title{A game characterizing Baire class 1 functions}

\author{Viktor Kiss}
\address{Alfr\'ed R\'enyi Institute of Mathematics\\
Hungarian Academy of Sciences\\
P.O. Box 127, H-1364 Budapest, Hungary and Department of Mathematics, Cornell University, Ithaca, NY 14853.}
\email{kiss.viktor@renyi.mta.hu}
\thanks{The author was partially supported by the National Research, Development and Innovation Office	-- NKFIH, grants no.~113047, 104178, 124749, 129211 and 128273, and by the NSF grant DMS-1455272}

\subjclass[2010]{Primary 26A21; Secondary 03E15, 54H05.}
\keywords{Baire class 1 functions, game, determinacy, derivative, first return recoverability.}

\begin{abstract}
  Duparc introduced a two-player game for a function $f$ between zero-dimensional Polish spaces in which Player II has a winning strategy iff $f$ is of Baire class 1. We generalize this result by defining a game for an arbitrary function $f : X \to Y$ between arbitrary Polish spaces such that Player II has a winning strategy in this game iff $f$ is of Baire class 1. Using the strategy of Player II, we reprove a result concerning first return recoverable functions.
\end{abstract}

\maketitle

\section{Introduction}

A \emph{Polish space} is a separable, completely metrizable topological space. A function $f : X \to Y$ between Polish spaces $X$ and $Y$ is called \emph{Baire class 1} if the inverse image $f^{-1}(U)$ of an open subset $U \subseteq Y$ is $F_\sigma$ in $X$, that is, it is the countable union of closed sets. 

The main focus of this paper are games characterizing classes of functions: these are two-player games depending on a function $f$ with the property that one of the players, say Player II has a winning strategy in the game if and only if $f$ belongs to the specific class. In his Ph.D. Thesis \cite{WadgePHD}, Wadge introduced a game that characterizes Lipschitz functions, and another one that characterizes continuous functions, both for self-maps of the Baire space, $\N^\N$. 

The eraser game introduced (essentially) by Duparc \cite{Duparc} characterizes Baire class 1 functions $f : \N^\N \to \N^\N$. Since we wish to generalize this game, we briefly define it here. 

\begin{equation*}
\begin{split}
  &\text{I}  \;\; \quad a_0 \quad \quad a_1 \quad \quad a_2 \quad \quad a_ 3 \quad \quad a_ 4 \; \dots \\
  &\text{II} \quad \quad \quad b_0 \quad \quad b_1 \quad \quad \not{b_1} \quad \quad \not{b_0} \quad \quad b'_0 \; \dots 
\end{split}
\end{equation*}

In the eraser game, at each step, Player I has to play a natural number. Player II either plays a natural number, or erases the last natural number that appears on his board. During the game, Player I builds an element of the Baire space $a \in \N^\N$. Player II is also required to build an element $b = (b_n)_{n \in \N} \in \N^\N$, otherwise he loses. In other words, for every $n \in \N$ there has to be an index $m \in \N$ such that the board of Player II contains at least $n$ natural numbers after the $m$th step of the game, and the first $n$ natural numbers on his board are not erased later. For a fixed function $f: \N^\N \to \N^\N$, Player II wins a run of the eraser game if and only if $f(a) = b$, where $a \in \N^\N$ and $b \in \N^\N$ again denotes the elements of the Baire space built by Player I and Player II, respectively. 

By a result of Duparc, $f : \N^\N \to \N^\N$ is Baire class 1 if and only if Player II has a winning strategy in the eraser game. Carroy \cite{Carroy} proved that if $f$ is not Baire class 1 then Player I has a winning strategy. 

There have been other results for different classes of functions between zero-dimensional Polish spaces. Andretta \cite{Andretta} proved that the so-called back-track game characterizes $\boldsymbol{\Delta}^0_2$-measurable functions, Semmes \cite{Semmes} characterized Borel games, and Nobrega \cite{Baire_xi_games} constructed games characterizing Baire class $\xi$ functions for every countable ordinal $\xi$. For a more thorough introduction on the subject, and results concerning piecewise defined functions, see Motto Ros \cite{Motto Ros}. 

In this paper we construct a two-player game $G_f$ that can be used to characterize Baire class 1 functions between arbitrary Polish spaces, generalizing the eraser game. Let $X$ and $Y$ be Polish spaces, let $d_X$ be a compatible, complete metric on $X$ and let $f : X \to Y$ be an arbitrary function. At the $n$th step of the game, Player I plays $x_n$, then Player II plays $y_n$, 
\begin{equation*}
\begin{split}
  &\text{I}  \;\; \quad x_0 \quad \quad x_1 \quad \quad x_2 \quad \dots \\
  &\text{II} \quad \quad \quad y_0 \quad \quad y_1 \quad \quad y_2 \quad \dots 
\end{split}
\end{equation*}
with the rules that 
\begin{equation}
\label{e:condition of game}
\text{$x_n \in X$, $y_n \in Y$ and $d_X(x_n, x_{n + 1}) \le 2^{-n}$ for every $n \in \N$.}
\end{equation}
From the fact that $d_X$ is complete, it follows that $x_n \to x$ for some $x \in X$. Player II wins a run of the game if and only if $(y_n)$ is convergent and $y_n \to f(x)$. We note that if $X = Y = \N^\N$ and $d_X$ is the metric with $d_X(x, x') = 2^{-n + 1}$, where $n$ is the smallest index with $x(n) \neq x'(n)$, then from a winning strategy of Player II in the eraser game, one can derive a winning strategy for Player II in $G_f$ and vice-versa. 

Our main theorem concerning this game is the following.

\begin{theorem}
  \label{t:main}
  If $f$ is of Baire class 1 then Player II has a winning strategy in $G_f$. If $f$ is not of Baire class 1 then Player I has a winning strategy in $G_f$. In particular, the game $G_f$ is determined. 
\end{theorem}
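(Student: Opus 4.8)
The plan is to reduce everything to the oscillation-rank analysis of Baire class $1$ functions. For a closed set $F \subseteq X$ and $\varepsilon > 0$, write $\osc(f\restriction F, x) = \inf\{\diam f(U \cap F) : U \text{ open}, x \in U\}$ and set $D_\varepsilon(F) = \{x \in F : \osc(f\restriction F,x) \ge \varepsilon\}$, a closed subset of $F$. Iterating and intersecting at limits yields a decreasing transfinite sequence $D_\varepsilon^\alpha(X)$. The first step is to record the (generalized Baire) characterization: $f$ is of Baire class $1$ iff for every $\varepsilon$ the hierarchy is well-founded, i.e.\ $D_\varepsilon^\alpha(X) = \emptyset$ for some countable $\alpha$; equivalently, there is no nonempty closed $P$ with $D_\varepsilon(P) = P$. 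I would prove this by the standard argument that $f\restriction F$ has a point of continuity for every nonempty closed $F$ exactly when $f$ is Baire class $1$, valid for an arbitrary Polish target. Both directions will be driven by this dichotomy together with the one-sided information structure of $G_f$: after move $n$ the rule $d_X(x_n,x_{n+1}) \le 2^{-n}$ forces $x \in B_n := \{z : d_X(x_n,z) \le 2^{-n+1}\}$, a closed set of diameter $\to 0$ that localizes the unknown limit $x$.

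For the first assertion, assume $f$ is Baire class $1$ and describe a winning strategy for Player II. Fix a precision $\varepsilon = 2^{-k}$. At stage $n$ let $\gamma_n^k$ be the largest level $\alpha$ with $B_n \cap D_\varepsilon^\alpha(X) \neq \emptyset$; since $x \in B_n$ lies in the hierarchy up to its own rank, $\gamma_n^k$ is well defined and, because the $B_n$ shrink, eventually non-increasing. Player II plays a value close to $f(z)$ for a point $z \in B_n \cap D_\varepsilon^{\gamma_n^k}(X)$ realizing oscillation $< \varepsilon$ at that level. The key point is that a strictly decreasing sequence of ordinals is finite, so $\gamma_n^k$ stabilizes at the local rank of $x$; from then on the played value and $f(x)$ both lie in a set of $f$-diameter $< \varepsilon$, so the guesses stay within $\varepsilon$ of $f(x)$. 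Interleaving these procedures over $k = 0,1,2,\dots$, refining the precision as $n$ grows, produces $y_n \to f(x)$. Thus the genuine difficulty here, that Player II must commit to $y_n$ from the finite data $x_0,\dots,x_n$ without knowing $x$, is resolved entirely by well-foundedness: only finitely many ``mind changes'' occur at each scale.

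For the second assertion, assume $f$ is not Baire class $1$ and fix $\varepsilon > 0$ and a nonempty closed $P$ with $D_\varepsilon(P) = P$; thus in every relatively open piece of $P$ the function $f$ oscillates by at least $\varepsilon$. Player I plays entirely inside $P$. Having seen Player II's last value $y_n$, Player I uses the oscillation to select $x_{n+1} \in P$ within distance $2^{-n}$ of $x_n$ with $d_Y(f(x_{n+1}),y_n) \ge \varepsilon/2$, committing to a relatively closed ball of radius $\to 0$ around it so that a limit $x \in P$ exists. The identity $D_\varepsilon(P) = P$ is exactly what guarantees that this reaction is available at every stage and every scale, so Player I can diagonalize against Player II forever.

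The main obstacle is to turn this perpetual diagonalization into an actual win, i.e.\ to guarantee $y_n \not\to f(x)$ for the true limit $x$. The subtlety is that Player I only ever observes the values $f(x_{n+1})$ at the points he visits, whereas the payoff depends on $f$ at the \emph{unvisited} limit $x$, where $f$ is wildly discontinuous, so $f(x_{n+1}) \not\to f(x)$ in general and the naive steering above need not pin $f(x)$ away from $\lim y_n$. I expect this to be the technical heart of the proof: rather than steering a single point, Player I should run a fusion that builds a perfect set of candidate limits inside $P$ while reacting to Player II's guesses, and then invoke a Baire-category argument to place the actual limit on a point whose value disagrees with the value toward which Player II's strategy is forced. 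Once both implications are established the final sentence is immediate: the two hypotheses are exhaustive, so exactly one player has a winning strategy and $G_f$ is determined. It is worth noting that this determinacy does not follow from Borel determinacy, since the payoff set depends on the map $x \mapsto f(x)$, which for a general $f$ is not Borel in the run of the game.
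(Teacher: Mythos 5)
Your proposal starts from the same ingredients as the paper (oscillation derivatives, Baire's theorem on points of relative continuity), but each direction has a genuine gap, in both cases exactly at the step that carries the real difficulty of the theorem.

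For Player II, your fixed-precision analysis is essentially sound: for a single $\varepsilon$, the level $\gamma_n^k$ is eventually well defined, non-increasing, and stabilizes at the rank $\alpha_x$ of the limit point (though your justification of well-definedness is off: the set of levels $\alpha$ with $B_n \cap D_\varepsilon^\alpha(X) \neq \emptyset$ is downward closed but need not have a maximum, since at a limit level the decreasing nonempty closed sets $B_n \cap D_\varepsilon^\alpha(X)$ can have empty intersection, $B_n$ being non-compact; this is patchable because eventually that set of levels is exactly $[0,\alpha_x]$). The unproved step is the single sentence ``interleaving these procedures over $k$ \dots produces $y_n \to f(x)$.'' For each fixed $k$ your procedure is guaranteed accurate only after an unknown stage $N_k$, namely once the level has stabilized at $\alpha_x$ \emph{and} $B_n$ has entered the neighborhood witnessing $\osc_{f \restriction D_{\varepsilon_k}^{\alpha_x}(X)}(x) < \varepsilon_k$; neither event is detectable from $x_0,\dots,x_n$, and $N_k$ depends on Player I's future moves. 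So for any schedule $k(n) \to \infty$, however adaptive, one must rule out that $n < N_{k(n)}$ (with a badly wrong guess) happens infinitely often, and nothing in your outline does this. This is not a presentational issue: the paper explicitly notes (via the modified Dirichlet example) that single-scale reasoning suffices only for special functions, and its actual strategy is not an interleaving of fixed-scale hierarchies at all. It uses one derivative whose scale $o_f(F)$ adapts to the set being derived, auxiliary ordinals $\gamma_n$ recording the first level at which the scale profiles of consecutive balls differ, and a three-case verification (the levels where the scale drops stabilize, increase cofinally, or never occur), in each case producing a decreasing sequence $F_n$ with $o_f(F_n) \to 0$ to which a convergence lemma applies. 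Your proposal contains no substitute for this cross-scale bookkeeping.

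For Player I, you correctly observe that forcing $d_Y(f(x_{n+1}), y_n) \ge \varepsilon/2$ at every move controls nothing about $f$ at the limit, but the repair you propose (a fusion building a perfect set of candidate limits plus a Baire-category argument) is vague, not carried out, and unnecessary. The resolution is much simpler and exploits the winning condition itself: Player II must produce a \emph{convergent} sequence. Take $P$ nonempty closed with $\osc_{f \restriction P} \ge \varepsilon$ at every point of $P$. Player I plays $x^0 \in P$ repeatedly and does not move until II plays some $y_{n_0} \in B(f(x^0), \varepsilon/7)$; if II never does, the limit is $x^0$ and $y_n \not\to f(x^0)$, so I wins. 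If II does, Player I jumps to $x^1 \in B(x^0, 2^{-n_0}) \cap P$ with $d_Y(f(x^1), f(x^0)) \ge 3\varepsilon/7$ (such a point exists by the uniform oscillation) and waits again. Either the process halts at some $x^k$ and II loses at the limit $x^k$, or it recurs infinitely often, in which case consecutive committed values satisfy $d_Y(y_{n_k}, y_{n_{k+1}}) \ge \varepsilon/7$, so $(y_n)$ is not Cauchy and II loses \emph{regardless} of the value $f(x)$. Thus no control over $f$ at the unvisited limit is needed; forcing divergence of $(y_n)$ suffices, and your ``technical heart'' dissolves once this is seen.
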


\begin{remark}
  We note that if we change the rules of the game $G_f$ and leave out the condition that $d_X(x_n, x_{n + 1}) \le 2^{-n}$ and, of course, change the winning condition so that Player II wins if and only if $(x_n)_{n \in \N}$ is divergent or $y_n \to f(x)$ where $x_n \to x$, then Theorem \ref{t:main} does not remain true. To see this, let $f : \R \to \R$ be defined by $f(0) = 1$, $f(x) = 0$ if $x \neq 0$. It is easy to see that $f$ is of Baire class 1, but Player I has a winning strategy in this modified game. 
  
  We sketch the proof of this. The construction of the winning strategy of Player I is similar to the construction in the proof of Theorem \ref{t:main}. Let us fix a sequence $(x^n)_{n \in \N}$ with $x^n \to 0$ and $x^{2n} = 0$, $x^{2n + 1} \neq 0$ for every $n \in \N$. Now let Player I play a sequence $x_0, x_1,  \dots$ with $x_0 = \dots = x_{n_0} = x^0$, $x_{n_0 + 1} = \dots = x_{n_1} = x^1$ etc., where at each step he waits until Player II plays an element $y_{n_k} \in (3/4, 5/4)$ if $k$ is even, and $y_{n_k} \in (-1/4, 1/4)$ if $k$ is odd. One can easily check that the sequence $(x_n)_{n \in \N}$ is either constant $x^k$ after a while with $y_n \not \to f(x^k)$, or $x_n \to 0$ but $|y_{n_k} - y_{n_{k + 1}}| \ge 1/2$, hence the sequence $(y_n)_{n \in \N}$ does not converge. 
\end{remark}

After proving our main result, we investigate the connection of the above game and the notion of first return recoverable functions that was introduced by Darji and Evans \cite{DE}. Let $X$ and $Y$ be Polish spaces, and $d_X$ be a compatible metric on $X$. By a \emph{trajectory}, we mean a dense sequence $(x_n)_{n} \subseteq X$. For any $s \in X$, let $r(B(s, \rho))$ denote the first element of the trajectory $(x_n)_n$ in the open ball $B(s, \rho) = \{s' \in X : d_X(s, s') < \rho\}$. Then the \emph{first return route to $s$}, $(s_n)_n$ is defined as 
\begin{align*}
s_0 &= x_0, \\
s_{n + 1} &= \left\{\begin{array}{lc}
r(B(s, d(s, s_n))) & \text{if } s \neq s_n, \\
s & \text{if } s = s_n.
\end{array} \right.
\end{align*}

A function $f : X \to Y$ is said to be \emph{first return recoverable} if there exists a trajectory $(x_n)_n$ such that for any $s \in X$, we have $f(s_n) \to f(s)$. The definition basically says that the values of $f$ can be recovered using a simple algorithm, knowing the values at only countably many points. 

Darji and Evans showed that every recoverable function is Baire class 1, and if $X$ is a compact metric space then the converse also holds, giving a characterization of Baire class 1 functions. Lecomte \cite{Lecomte} found an example of a Baire 1 function on a Polish ultrametric space which is not recoverable. However, he also showed that if $X$ is an ultrametric space with the property that every strictly decreasing sequence in the range of the ultrametric converges to 0, then every Baire 1 function defined on $X$ is recoverable. Later Duncan and Solecki \cite{DS} gave a characterization of those Polish ultrametric spaces on which every real-valued, Baire class 1 function is recoverable.

From the result of Lecomte it follows easily that if $X$ is a zero-dimensional Polish space then one can find a compatible, complete metric $d$ on $X$ such that $f : X \to Y$ ($Y$ Polish) is first return recoverable with respect to $d$ if and only if $f$ is of Baire class 1. To the best of the author's knowledge, the analogous problem concerning an arbitrary Polish space $X$ is still open. Nonetheless, we give an alternate proof of the result for the zero-dimensional case in Section \ref{s:first return}. We use the strategy of Player II in the above game to extract a suitable trajectory. 

\section{Proof of Theorem \ref{t:main}}

In this section we prove Theorem \ref{t:main}. As we mentioned before, Carroy \cite{Carroy} constructs a winning strategy for Player I in the eraser game (if $f$ is not Baire class 1), and using his ideas one can construct a winning strategy for Player I in $G_f$. We include a construction anyway to keep the paper self-contained. The main difficulty is to construct a winning strategy for Player II when $f$ is of Baire class 1. In the eraser game a strategy is constructed using the fact that a Baire class 1 function $f : \N^\N \to \N^\N$ is always the pointwise limit of a sequence of continuous functions. However, this is not the case for functions between arbitrary Polish spaces, hence we need new ideas to complete the proof. 

\begin{proof}[Proof of Theorem \ref{t:main}]
  Let us fix a compatible, complete metric $d_Y$ for $Y$ and denote the oscillation of $f$ restricted to a closed set $F \subseteq X$ at a point $x \in F$ by 
  \begin{equation}
    \label{e:def of osc}
    \osc_{f \restriction F}(x) = \inf_{U \ni x \text{ open}} \sup\{d_Y(f(x_1), f(x_2)) : x_1, x_2 \in U \cap F\}.
  \end{equation}
  It is easy to check that 
  \begin{equation}
    \label{e:osc = 0 <=> continuous} \osc_{f \restriction F}(x) = 0 \Leftrightarrow \text{$f\restriction F$ is continuous at $x$},
  \end{equation}
  and that $\osc_{f \restriction F}$ is upper semi-continuous, hence for every $\varepsilon > 0$, 
  \begin{equation}
    \label{e:set with large osc is closed}
    \{x \in F : \osc_{f \restriction F}(x) \ge \varepsilon\} \text{ is closed}. 
  \end{equation}
  
  To prove the first assertion of the theorem, suppose that $f$ is of Baire class 1. We need to define a winning strategy for Player II, hence a method of coming up with $y_n$ if $x_0, x_1, \dots, x_n$ are already given. Of course, the possible limit point $x$ of the sequence $(x_n)_{n \in \N}$ is in the closed ball $\overline{B}(x_n, 2^{1 - n}) = \{x' \in X : d_X(x', x_n) \le 2^{1 - n}\}$. The idea of the proof is to pick $y_n$ as the image of a point in $\overline{B}(x_n, 2^{1 - n})$ at which $f$ behaves ``badly''. We note here that for some functions, including the modified Dirichlet function (that is the function $g : \R \to \R$ with $g(p/q) = 1/q$ if $p$ and $q$ are relatively prime and $q > 0$, and $g(x) = 0$ if $x \not \in \Q$ or $x = 0$) it would be sufficient to pick $y_n$ as the image of a point in $\overline{B}(x_n, 2^{1 - n})$ with the largest oscillation (or a sufficiently large oscillation), because the function restricted to the set of points with large oscillation is continuous. However, in the general case the restriction may not be continuous, and we need to do an iterative construction.

  With the help of the set
  $$R = \{0\} \cup \left\{1/n : n \in \N, n > 0\right\}$$
  and the function 
  \begin{equation}
    \label{e:o def}
    o_f(F) = \left\{\begin{matrix} \max\{r \in R : \exists \, x \in F \, (\osc_{f \restriction F}(x) \ge r)\} && \text{if $F \neq \emptyset$}, \\ 
    0 && \text{if $F = \emptyset$},\end{matrix}\right.
  \end{equation}
  we define a derivative operation on the family of closed subsets of $X$ by
  \begin{equation*}
    \label{e:D derivative def}
    D(F) = \left\{\begin{matrix} \{x \in F : \osc_{f \restriction F}(x) \ge o_f(F)\} && \text{if $o_f(F) > 0$}, \\ \emptyset && \text{if $o_f(F) = 0$}. \end{matrix} \right.
  \end{equation*}
  Using \eqref{e:set with large osc is closed}, $D(F)$ is closed for every closed set $F \subseteq X$. Using Baire's theorem that a Baire class 1 function has a point of continuity restricted to every non-empty closed subset (see e.g. \cite[Theorem 24.15]{K}), if $o_f(F) > 0$, or equivalently by \eqref{e:osc = 0 <=> continuous}, if $f\restriction F$ is not continuous then $D(F) \subsetneq F$. If $o_f(F) = 0$ but $F \neq \emptyset$ then we also have $D(F) = \emptyset \subsetneq F$, hence 
  \begin{equation}
    \label{e:D(F) neq F} 
    F \neq \emptyset \Rightarrow D(F) \subsetneq F.
  \end{equation}
  We also note here that 
  \begin{equation}
  \label{e:F < F' -> o_f(F) <= o_f(F')}
  F \subseteq F' \Rightarrow o_f(F) \le o_f(F'), \text{ and}
  \end{equation}
  \begin{equation}
    \label{e:F not empty D(F) empty -> o_f(F) = 0}
    F \neq \emptyset \land D(F) = \emptyset \Rightarrow o_f(F) = 0. 
  \end{equation}
  
  Now we define the iterated derivative of a closed subset $F \subseteq X$ the usual way for each $\alpha < \omega_1$, that is,
  \begin{equation*}
  \begin{split}
    D^0(F) &= F, \\
    D^{\alpha + 1}(F) &= D(D^{\alpha}(F)), \\
    D^{\alpha}(F) &= \bigcap_{\beta < \alpha} D^{\beta}(F) 
      \text{ if $\alpha$ is limit.}  
  \end{split}
  \end{equation*}
  It can be easily shown by transfinite induction on $\beta$ that 
  \begin{equation}
  \label{e:al < be -> D^al > D^be}
  \text{$\alpha < \beta \Rightarrow D^\alpha(F) \supseteq D^\beta(F)$ for every closet set $F \subseteq X$.}
  \end{equation}
   
  Using \eqref{e:D(F) neq F} and the fact that strictly decreasing transfinite sequences of closed subsets of a Polish space are always countable (see e.g. \cite[Theorem 6.9]{K}), for every closed set $F \subseteq X$ there exists a countable ordinal $\lambda$ with $D^\lambda(F) = \emptyset$. Let us denote the smallest such $\lambda$ by $\lambda(F)$. 

  The derivative operation will be used to construct the closed sets in the following, main lemma, where we say that a sequence of closed sets $(F_n)_{n \in \N}$ \emph{converges} to a point $x$, if any neighborhood of $x$ contains all, but finitely many of the $F_n$'s.
  \begin{lemma}
    \label{l:o_f(F_n) -> 0, F_n -> x}
    Let $(F_n)_{n \in \N}$ be a decreasing sequence of nonempty closed sets converging to $x$ with $o_f(F_n) \to 0$. Suppose that the sequence $(y_n)_{n \in \N} \subseteq Y$ satisfies for each $n \ge 1$ that either $y_n \in f(F_n)$ or $y_n = y_{n - 1}$, with the first possibility occurring infinitely many times. Then $y_n \to f(x)$. 
  \end{lemma}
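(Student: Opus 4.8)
The plan is to first pin down the limit point $x$ inside the sets $F_n$, then to convert the hypothesis $o_f(F_n) \to 0$ into a uniform oscillation bound on each $F_n$ (and in particular at $x$), and finally to combine that bound with the set-convergence $F_n \to x$ to trap the values $y_n$. The first step is to verify that $x \in F_n$ for every $n$. Fixing $n_0$, if $x \notin F_{n_0}$ then closedness gives a neighborhood $U \ni x$ disjoint from $F_{n_0}$; but convergence of $(F_n)$ to $x$ forces $F_n \subseteq U$ for all large $n$, and for $n \ge n_0$ also $F_n \subseteq F_{n_0}$, so $F_n \subseteq U \cap F_{n_0} = \emptyset$, contradicting nonemptiness. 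Hence $x \in \bigcap_n F_n$.

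Next I would fix $\varepsilon > 0$ and choose $r \in R$ with $0 < r < \varepsilon/2$. Since $o_f(F_n) \to 0$, there is $N_1$ with $o_f(F_n) < r$ for all $n \ge N_1$, and then by \eqref{e:o def} no point of $F_n$ can have oscillation $\ge r$, so $\osc_{f \restriction F_n}(z) < \varepsilon/2$ for every $z \in F_n$ and every $n \ge N_1$. In particular $\osc_{f \restriction F_{N_1}}(x) < \varepsilon/2$, so by \eqref{e:def of osc} there is an open $U \ni x$ with $d_Y(f(z_1), f(z_2)) < \varepsilon/2$ for all $z_1, z_2 \in U \cap F_{N_1}$; taking $z_2 = x$ yields $d_Y(f(z), f(x)) < \varepsilon/2$ for every $z \in U \cap F_{N_1}$.

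Then I would invoke convergence $F_n \to x$ to pick $N_2 \ge N_1$ with $F_n \subseteq U$ for all $n \ge N_2$. For any ``new'' index $k \ge N_2$, meaning $y_k \in f(F_k)$, write $y_k = f(z_k)$ with $z_k \in F_k$; since $k \ge N_2 \ge N_1$ and the sets are decreasing, $z_k \in F_k \subseteq U \cap F_{N_1}$, whence $d_Y(y_k, f(x)) < \varepsilon/2 < \varepsilon$. Because new indices occur infinitely often, I let $N_3 \ge N_2$ be the first of them; for every $n \ge N_3$ the value $y_n$ equals $y_k$ at the last new index $k \le n$, which satisfies $k \ge N_3 \ge N_2$, so $d_Y(y_n, f(x)) < \varepsilon$. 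As $\varepsilon$ was arbitrary, $y_n \to f(x)$.

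The main obstacle, and the only point where the three hypotheses genuinely interact, is that the points $z_n$ producing the ``new'' values $y_n = f(z_n)$ need not lie near $x$ a priori, so the oscillation bound at $x$ by itself controls nothing. The resolution is that the oscillation is measured inside the whole set $F_{N_1}$, and that $x \in F_{N_1}$ together with the set-convergence $F_n \to x$ eventually squeezes the entire set $F_n$ into the neighborhood $U$ on which $f$ stays within $\varepsilon/2$ of $f(x)$; thus every admissible $z_n$ lands in $U \cap F_{N_1}$ automatically. The infinitely-often clause is exactly what prevents the constant-copying stretches from freezing $y_n$ at a stale value chosen before $F_n$ entered $U$.
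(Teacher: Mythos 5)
Your proof is correct and follows essentially the same route as the paper's: use $x \in \bigcap_n F_n$ and the smallness of $o_f(F_n)$ to get a small oscillation of $f\restriction F_{N_1}$ at $x$, hence a neighborhood of $x$ on which $f\restriction F_{N_1}$ stays within $\varepsilon$ of $f(x)$; then use the set-convergence $F_n \to x$ to force the later $F_n$ into that neighborhood, and handle the copying steps by tracing each $y_n$ back to the last ``new'' index. The only (immaterial) difference is that you keep the neighborhood $U$ throughout, while the paper converts it to a $\delta$-ball and argues via $\diam(F_n) \le \delta$; your choice of $r \in R$ with $r < \varepsilon/2$ also sidesteps the paper's restriction to $\varepsilon < 1$.
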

  \begin{proof}
    It is clear from the facts that $(F_n)_{n \in \N}$ is decreasing sequence of closed sets with $F_n \to x$ that $\bigcap_n F_n = \{x\}$. Now let $\varepsilon > 0$, $\varepsilon < 1$ be fixed, we need to find $n_0 \in \N$ with $d_Y(y_n, f(x)) \le \varepsilon$ for every $n \ge n_0$. Let $n_1 \in \N$ be large enough so that $o_f(F_n) < \varepsilon / 2$ for every $n \ge n_1$. Since $x \in F_{n_1}$, it easily follows from the definition of $o_f$, \eqref{e:o def}, that $\osc_{f\restriction F_{n_1}}(x) < \varepsilon$, hence for small enough $\delta > 0$, 
    \begin{equation}
      \label{e:d(x', x) < delta, x' in F_n_1 -> d(f(x'), f(x)) < eps}
      \text{$d_X(x', x) \le \delta$ and $x' \in F_{n_1}$ imply $d_Y(f(x'), f(x)) < \varepsilon$. }
    \end{equation}

    Now let $n_0 \ge n_1$ be large enough so that 
    \begin{equation}
      \label{e:diam(F_n) <= delta}
      \text{$\diam(F_n) \le \delta$ for every $n \ge n_0$,}
    \end{equation}
    and since $y_n \in f(F_n)$ for infinitely many $n$, we can also suppose that 
    \begin{equation}
      \label{e:y_n_0 in F_n_0}
      y_{n_0} \in f(F_{n_0}).
    \end{equation}
    Let $n \ge n_0$ be fixed, we need to show that $d_Y(y_n, f(x)) < \varepsilon$. If $y_n \in f(F_n)$ then $d_Y(y_n, f(x)) < \varepsilon$ using \eqref{e:diam(F_n) <= delta}, \eqref{e:d(x', x) < delta, x' in F_n_1 -> d(f(x'), f(x)) < eps} and the fact that $n \ge n_1$ implies $F_n \subseteq F_{n_1}$. If $y_n \not \in f(F_n)$ then using \eqref{e:y_n_0 in F_n_0}, there exists $k < n$ with $k \ge n_0$ such that $y_n = y_k \in f(F_k)$. Then, as we already saw, $d_Y(y_k, f(x)) < \varepsilon$, thus the proof of the lemma is complete. 
  \end{proof}  
  
  During the construction of a winning strategy for Player II, we use the notation 
  \begin{equation}
  \begin{split}
    \label{e:def of X, r, lambda}
    X_i^\alpha &= D^\alpha(\overline{B}(x_i, 2^{1 - i})), \\
    \lambda_i &= \lambda(X_i^0) = \lambda(\overline{B}(x_i, 2^{1 - i})),
  \end{split}
  \end{equation}
  where, as before, $\overline{B}(x_i, 2^{1 - i})$ denotes the closed ball $\{x' \in X : d_X(x', x_i) \le 2^{1 - i}\}$. Note that using the rules of the game \eqref{e:condition of game} and that $x_i \to x$, we have 
  \begin{equation}
    \label{e:x in B(x_n,...)}
    \text{$x \in \overline{B}(x_i, 2^{1 - i})$ for every $i$}. 
  \end{equation}
  For $i \ge 1$ let $\gamma_i$ denote the smallest ordinal $\gamma < \omega_1$ such that $o_f(X_i^\gamma) \neq o_f(X_{i - 1}^\gamma)$ if such an ordinal exists, and let $\gamma_i = \omega_1$ otherwise. 
  
  Before defining the strategy for Player II, we collect a couple of simple properties of the sets $X_n^\alpha$ that we will use in our proof. Let $\diam(H)$ denote the diameter of the set $H \subseteq X$, that is, $\diam(H) = \sup\{d_X(x', x'') : x', x'' \in H\}$.
  \begin{claim}
  \label{c:basic properties}
  \begin{enumerate}[(i)]
    \item\label{p:al < be -> X_n^al > X_n^be} $\forall n \left(\alpha < \beta \Rightarrow X_n^\alpha \supseteq X_n^\beta\right)$,
    \item\label{p:diameter} $\forall n\; \forall \alpha \left(\diam(X_n^\alpha) \le 2^{2 - n}\right)$,
    \item\label{p:X_n^al < X_n-1^al} $\forall n \ge 1\; \forall \alpha \le \gamma_n \left(\alpha < \omega_1 \Rightarrow X_n^\alpha \subseteq X_{n - 1}^\alpha\right),$
    \item\label{p:o_f decreases at gamma_n} $\forall n \ge 1 \left(\gamma_n < \omega_1 \Rightarrow o_f(X_n^{\gamma_n}) < o_f(X_{n - 1}^{\gamma_n})\right).$
  \end{enumerate}
  \end{claim}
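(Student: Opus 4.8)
The plan is to treat the four parts in order, observing that (i) and (ii) are essentially immediate, that the substance lies in (iii), and that (iv) follows from (iii) in one line. For (i), I would simply note that $X_n^\alpha = D^\alpha(\overline{B}(x_n, 2^{1-n}))$ by the definition \eqref{e:def of X, r, lambda}, so the assertion is exactly the monotonicity \eqref{e:al < be -> D^al > D^be} applied to the closed ball $\overline{B}(x_n, 2^{1-n})$. For (ii), since $X_n^0 = \overline{B}(x_n, 2^{1-n})$, part (i) gives $X_n^\alpha \subseteq \overline{B}(x_n, 2^{1-n})$ for all $\alpha$; as a closed ball of radius $2^{1-n}$ has diameter at most $2 \cdot 2^{1-n} = 2^{2-n}$, the same bound holds for any subset.

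The core is (iii), which I would prove by transfinite induction on $\alpha \le \gamma_n$. First I would record the monotonicity of oscillation under restriction: directly from \eqref{e:def of osc}, if $F \subseteq F'$ then $\osc_{f \restriction F}(x) \le \osc_{f \restriction F'}(x)$ for every $x \in F$, since replacing $F$ by the smaller set only shrinks each supremum in the infimum. For the base case $\alpha = 0$, I would invoke the game rule \eqref{e:condition of game} to get $d_X(x_{n-1}, x_n) \le 2^{1-n}$, so that the triangle inequality yields $\overline{B}(x_n, 2^{1-n}) \subseteq \overline{B}(x_{n-1}, 2^{2-n})$, i.e. $X_n^0 \subseteq X_{n-1}^0$. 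At a limit ordinal $\alpha$ the inclusion passes through the defining intersections: $X_n^\alpha = \bigcap_{\beta < \alpha} X_n^\beta \subseteq \bigcap_{\beta < \alpha} X_{n-1}^\beta = X_{n-1}^\alpha$, using the inductive inclusions for each $\beta < \alpha$.

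The successor step $\alpha = \beta + 1 \le \gamma_n$ is where I expect the main obstacle. The key observation is that since $\beta < \gamma_n$, the definition of $\gamma_n$ forces $o_f(X_n^\beta) = o_f(X_{n-1}^\beta)$; call this common value $r$. If $r = 0$ then $D(X_n^\beta) = \emptyset$ and the inclusion is trivial. If $r > 0$, then for any $x \in D(X_n^\beta) = \{x \in X_n^\beta : \osc_{f \restriction X_n^\beta}(x) \ge r\}$ the inductive hypothesis gives $x \in X_n^\beta \subseteq X_{n-1}^\beta$, and the oscillation monotonicity yields $\osc_{f \restriction X_{n-1}^\beta}(x) \ge \osc_{f \restriction X_n^\beta}(x) \ge r = o_f(X_{n-1}^\beta)$, whence $x \in D(X_{n-1}^\beta) = X_{n-1}^{\beta+1}$. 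The delicate point is that the two derivatives are computed with thresholds $o_f(X_n^\beta)$ and $o_f(X_{n-1}^\beta)$, and these must coincide for the inclusion to go through; this is precisely what $\beta < \gamma_n$ guarantees, and it is exactly why the statement is restricted to $\alpha \le \gamma_n$.

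Finally, for (iv), assuming $\gamma_n < \omega_1$, part (iii) applied at $\alpha = \gamma_n$ gives $X_n^{\gamma_n} \subseteq X_{n-1}^{\gamma_n}$, so \eqref{e:F < F' -> o_f(F) <= o_f(F')} yields $o_f(X_n^{\gamma_n}) \le o_f(X_{n-1}^{\gamma_n})$; since the definition of $\gamma_n$ makes these two values distinct, the inequality must be strict.
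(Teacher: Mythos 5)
Your proposal is correct and follows essentially the same route as the paper: part (i) by citing \eqref{e:al < be -> D^al > D^be}, part (ii) from (i) and the ball's diameter, part (iii) by transfinite induction where the successor step uses $\alpha + 1 \le \gamma_n$ to equate the two thresholds $o_f(X_n^\alpha) = o_f(X_{n-1}^\alpha)$ together with monotonicity of the oscillation under restriction, and part (iv) by combining (iii) with \eqref{e:F < F' -> o_f(F) <= o_f(F')} and the definition of $\gamma_n$. Your pointwise phrasing of the successor step is just a rewording of the paper's chain of set inclusions, so there is nothing further to reconcile.
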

  \begin{proof}
    \eqref{p:al < be -> X_n^al > X_n^be} is the application of \eqref{e:al < be -> D^al > D^be} with $F = X_n^0$. 
    
    To see \eqref{p:diameter}, note that $X_n^\alpha \subseteq X_n^0$ for every $\alpha < \omega_1$ using \eqref{p:al < be -> X_n^al > X_n^be}, hence $\diam(X_n^\alpha) \le \diam(X_n^0) = \diam(\overline{B}(x_n, 2^{1 - n})) \le 2^{2 - n}$. 
    
    We prove \eqref{p:X_n^al < X_n-1^al} by transfinite induction on $\alpha$. It holds for $\alpha = 0$, since $X_{n - 1}^0 = \overline{B}(x_{n - 1}, 2^{2 - n}) \supseteq \overline{B}(x_n, 2^{1 - n}) = X_n^0$, using that $d_X(x_{n - 1}, x_n) \le 2^{1 - n}$ by \eqref{e:condition of game}. It is clear for a limit $\alpha$ that if $X_n^\beta \subseteq X_{n - 1}^\beta$ for every $\beta < \alpha$ then $X_n^\alpha \subseteq X_{n - 1}^\alpha$. It remains to show that if $X_n^\alpha \subseteq X_{n - 1}^\alpha$ and $\alpha + 1 \le \gamma_n$ then $X_n^{\alpha + 1} \subseteq X_{n - 1}^{\alpha + 1}$. From $\alpha + 1 \le \gamma_n$ it follows that $o_f(X_n^\alpha) = o_f(X_{n - 1}^\alpha)$. If $o_f(X_n^\alpha) = o_f(X_{n - 1}^\alpha) = 0$ then $X_{n}^{\alpha + 1} = X_{n - 1}^{\alpha + 1} = \emptyset$. 
    Otherwise, $X_n^{\alpha + 1} = 
    \{x \in X_n^\alpha : \osc_{f \restriction X_n^\alpha}(x) \ge o_f(X_n^\alpha)\} \subseteq 
    \{x \in X_{n}^\alpha : \osc_{f \restriction X_{n - 1}^\alpha}(x) \ge o_f(X_n^\alpha)\} \subseteq 
    \{x \in X_{n - 1}^\alpha : \osc_{f \restriction X_{n - 1}^\alpha}(x) \ge o_f(X_n^\alpha)\} = 
    X_{n - 1}^{\alpha + 1}$ using twice the inductive assumption $X_n^\alpha \subseteq X_{n - 1}^\alpha$ and also the fact that $F \subseteq F'$ implies $\osc_{f\restriction F}(x') \le \osc_{f \restriction F'}(x')$ for every $x' \in F$. 
    
    To see \eqref{p:o_f decreases at gamma_n}, note that $X_n^{\gamma_n} \subseteq X_{n - 1}^{\gamma_n}$ by \eqref{p:X_n^al < X_n-1^al}, hence $o_f(X_n^{\gamma_n}) \le o_f(X_{n - 1}^{\gamma_n})$ by \eqref{e:F < F' -> o_f(F) <= o_f(F')}, and by the definition of $\gamma_n$ we have $o_f(X_n^{\gamma_n}) \neq o_f(X_{n - 1}^{\gamma_n})$ yielding \eqref{p:o_f decreases at gamma_n}. 
  \end{proof}  
  
  Now we define the strategy for Player II, that is, we define $y_n \in Y$, given $x_0, \dots, x_n$ and $y_0, \dots, y_{n - 1}$. 
  
  \noindent{\bf Case (a):} \emph{$\lambda_n = \alpha + 1$ is successor.} In this case let $y_n \in f(X_n^{\alpha})$ be arbitrary. 
  
  \noindent{\bf Case (b):} \emph{$\lambda_n$ is limit and $\gamma_n < \lambda_n$.} In this case let $y_n \in f(X_n^{\gamma_n})$ be arbitrary. 
  
  \noindent{\bf Case (c):} \emph{$\lambda_n$ is limit and $\gamma_n \ge \lambda_n$.} Then let $y_n \in Y$ be arbitrary if $n = 0$, and let $y_n = y_{n - 1}$ otherwise. 
  
  This concludes the definition of the strategy for Player II.
  
  \begin{remark}
    \label{r:strategy for ultrametric}
    We insert here a remark to help us in the proof of Theorem \ref{t:first return}. Suppose $(X, d_X)$ happens to be an ultrametric space. It is well-known that in this case closed balls of radius $r$ form a partition of $X$ for any $r > 0$. Hence for each $n$, there are only countably many closed balls of the form $\overline{B}(x_n, 2^{1 - n})$, and each sits inside a unique closed ball of the form $\overline{B}(x_{n - 1}, 2^{2 - n})$. Hence, given $n$ and a closed ball of the form $\overline{B}(x_n, 2^{1 - n})$, we can calculate $\lambda_n$ and $\gamma_n$ without knowing $x_n$ or $x_{n - 1}$. Thus, we can also calculate the move $y_n$ of Player II. In Case (a) and Case (b) we can even choose for every $n$ and every ball $B = \overline{B}(x_n, 2^{1 - n})$ some element $x_{B, n} \in B$ such that Player II respects the strategy by playing $y_n = f(x_{B, n})$.
  \end{remark}
  
  We divide the proof of the correctness of the strategy into multiple cases. 
  
  \noindent{\bf Case (1):} \emph{for infinitely many $n$, $\gamma_n < \omega_1$}. Let 
  \begin{equation}
    \label{e:gamma def}
    \gamma = \min \{ \eta : \{n \in \N : \gamma_n \le \eta\} \text{ is infinite}\}. 
  \end{equation}
  Since we are in Case (1), $\gamma < \omega_1$. 
  
  \noindent{\bf Case (1a):} \emph{$\gamma_n \ge \gamma$ for all, but finitely many $n$}. Considering the assumptions of Case (1) and Case (1a), it is easy to see that there exists $m \in \N$ so that 
  \begin{equation}
    \label{e:m choice}
    \text{$\gamma_m = \gamma$, $\gamma_n \ge \gamma$ for every $n \ge m$ and $\gamma_n = \gamma$ for infinitely many $n$.}
  \end{equation}
  Now we use Lemma \ref{l:o_f(F_n) -> 0, F_n -> x}, with $X_{m + n}^\gamma$ in place of $F_n$ and $y_{m + n}$ in place of $y_n$. We need to check that the conditions of the lemma hold to complete Case (1a). It is easy to see that $X^\gamma_{m + n} \to x$ as $n \to \infty$, using \eqref{e:def of X, r, lambda} and \eqref{e:x in B(x_n,...)}. One can show by induction using \eqref{p:X_n^al < X_n-1^al} of Claim \ref{c:basic properties} that $X_j^\gamma \subseteq X_i^\gamma$ for every $i, j \ge m$ with $i \le j$, hence $(X_{m + n}^\gamma)_{n \in \N}$ is decreasing. 
  Using this observation and \eqref{e:F < F' -> o_f(F) <= o_f(F')} it follows that $o_f(X_j^\gamma) \le o_f(X_i^\gamma)$ for every $i, j \ge m$ with $i \le j$. Since $\gamma_n = \gamma$ for infinitely many $n$ by \eqref{e:m choice},
  $o_f(X_{m + n + 1}^\gamma) < o_f(X_{m + n}^\gamma)$ for infinitely many $n$, hence
  \begin{equation}
    \label{e:o_f > 0, o_f -> 0}
    \text{$o_f(X_n^\gamma) > 0$ for every $n \ge m$ and $o_f(X_n^\gamma) \to 0$.}
  \end{equation}

  It also follows that $X_{m + n}^\gamma \neq \emptyset$. Thus the conditions of Lemma \ref{l:o_f(F_n) -> 0, F_n -> x} concerning only $(X_{m + n}^\gamma)_{n \in \N}$ hold. 
  
  It remains to check that the conditions concerning $(y_{m + n})_{n \in \N}$ also hold. Using \eqref{e:o_f > 0, o_f -> 0}, $\gamma < \lambda_n$ for all $n \ge m$, hence $\gamma_n < \lambda_n$ for infinitely many $n$ by \eqref{e:m choice}. It follows that $y_{m + n}$ is chosen according to Case (a) or Case (b) for infinitely many $n$. Since $\gamma < \lambda_n$ and $\gamma \le \gamma_n$ for all $n \ge m$, $y_{m + n} \in f(X_{m + n}^\gamma) = f(F_n)$ in both cases. If $y_{m + n}$ is chosen according to Case (c) then $y_{m + n} = y_{m + n - 1}$, hence the conditions of Lemma \ref{l:o_f(F_n) -> 0, F_n -> x} hold, the strategy is winning for Player II in Case (1a).

  \noindent{\bf Case (1b):} \emph{$\gamma_n < \gamma$ for infinitely many $n$.} 
  \begin{claim}
    \label{c:prop of n_k}
    There exists a strictly increasing sequence $(n_k)_{k \in \N}$ such that $\gamma_{n_k}\to \gamma$ and $\gamma_n > \gamma_{n_k}$ for every $n > n_k$. In particular, the sequence $(\gamma_{n_k})_{k \in \N}$ is also strictly increasing. 
  \end{claim}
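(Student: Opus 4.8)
The plan is to realize the desired indices as exactly those $n$ at which the sequence $(\gamma_n)$ attains a strict minimum over its own tail. First I would unpack what the minimality in the definition \eqref{e:gamma def} of $\gamma$ gives: for every $\eta < \gamma$ the set $\{n \in \N : \gamma_n \le \eta\}$ is finite, since otherwise $\eta$ would be a smaller witness in \eqref{e:gamma def}. Setting $S = \{n \in \N : \gamma_n < \gamma\}$, which is infinite by the hypothesis of Case (1b), these two facts force the ordinals $\{\gamma_n : n \in S\}$ to be unbounded below $\gamma$, hence cofinal in $\gamma$; in particular $\gamma$ must be a limit ordinal, because if $\gamma = \delta + 1$ then $S = \{n : \gamma_n \le \delta\}$ would be finite, a contradiction.

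Next I would call an index $n$ a \emph{record} if $n \in S$ and $\gamma_m > \gamma_n$ for every $m > n$; note that this record condition is precisely the property $\gamma_n > \gamma_{n_k}$ (for all $n > n_k$) demanded of the indices in the claim. The core of the argument, and the step I expect to be the main obstacle, is to manufacture records whose values are arbitrarily close to $\gamma$ from below. Fix $\eta < \gamma$. Since only finitely many indices satisfy $\gamma_n \le \eta$, pick $M$ with $\gamma_n > \eta$ for all $n > M$. The set $\{n > M : n \in S\}$ is nonempty, so let $v = \min\{\gamma_n : n > M,\ n \in S\}$, noting $\eta < v < \gamma$. As only finitely many indices satisfy $\gamma_n \le v$, there is a \emph{largest} index $n^\ast > M$ with $\gamma_{n^\ast} = v$. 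Then $n^\ast$ is a record: for any $m > n^\ast$ either $m \notin S$ and $\gamma_m = \gamma > v$, or $m \in S$, in which case $\gamma_m \ge v$ by minimality of $v$ while $\gamma_m \neq v$ by maximality of $n^\ast$, so in both cases $\gamma_m > v = \gamma_{n^\ast}$.

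Finally, since such records exist with value exceeding every $\eta < \gamma$ and all record values lie strictly below $\gamma$, there must be infinitely many records; let $(n_k)_{k \in \N}$ enumerate them in increasing order of index. If $n_i < n_j$ are two records then $\gamma_{n_j} > \gamma_{n_i}$ simply because $n_i$ is a record, so $(\gamma_{n_k})_{k \in \N}$ is strictly increasing, giving the last assertion of the claim. Its supremum is at most $\gamma$, and it is at least $\gamma$ because the record values are cofinal in $\gamma$, so $\gamma_{n_k} \to \gamma$. The remaining requirement, that $\gamma_n > \gamma_{n_k}$ for every $n > n_k$, is exactly the record property of $n_k$, so the claim follows. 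The only delicate point throughout is the middle paragraph, where producing a genuine right-strict-minimum of prescribed large value requires combining the finiteness of each sublevel set $\{n : \gamma_n \le v\}$ with the choice of the \emph{largest} index attaining the minimal tail value $v$.
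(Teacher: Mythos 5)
Your proof is correct and is essentially the paper's argument in different packaging: both hinge on the fact that $\{n : \gamma_n \le \eta\}$ is finite for every $\eta < \gamma$ (immediate from the minimality in \eqref{e:gamma def}), and both obtain each $n_k$ as the \emph{largest} index in a suitable finite set, which is exactly what makes it a ``record''; the paper just does this greedily (seed with any $n > n_k$ satisfying $\gamma_n < \gamma$ and set $n_{k+1} = \max\{m \ge n : \gamma_m \le \gamma_n\}$), deducing $\gamma_{n_k} \to \gamma$ at the end from the definition of $\gamma$, rather than building the lower bound $\eta$ into the construction as you do. One harmless slip: for $m \notin S$ you may only assert $\gamma_m \ge \gamma$ (indeed $\gamma_m$ can exceed $\gamma$, even equal $\omega_1$), not $\gamma_m = \gamma$, but the inequality $\gamma_m > v$ you need still follows.
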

  \begin{proof}
    Let $n$ be arbitrary with $\gamma_n < \gamma$, and let $n_0 = \max\{m \ge n : \gamma_m \le \gamma_n\}$. The maximum exists using the fact that $\gamma_n < \gamma$ and the definition of $\gamma$ \eqref{e:gamma def}. Now let $n > n_0$ be arbitrary with $\gamma_n < \gamma$ and let $n_1 = \max\{m \ge n : \gamma_m \le \gamma_n\}$. Iterating this construction we get a strictly increasing sequence $(n_k)_{k \in \N}$ with the properties that $\gamma_{n_k} < \gamma$ and $\gamma_n > \gamma_{n_k}$ if $n > n_k$. Then $\sup\{\gamma_{n_k} : k \in \N\} \le \gamma$, and using \eqref{e:gamma def} again, one can easily see that $\gamma_{n_k} \to \gamma$.
  \end{proof}
  
  Now we fix such a sequence $(n_k)_{k \in \N}$ and use Lemma \ref{l:o_f(F_n) -> 0, F_n -> x} again, with $y_{n_0 + n}$ in place of $y_n$, and for $n \in \N$ taking the unique $k \in \N$ with $n_k \le n_0 + n < n_{k + 1}$ we use $X_{n_0 + n}^{\gamma_{n_k}}$ in place of $F_{n}$. It is easy to check that we defined the set $F_n$ for every $n \in \N$. We now check that the conditions of Lemma \ref{l:o_f(F_n) -> 0, F_n -> x} hold. It is clear that $F_n \to x$. To prove that $(F_{n})_{n \in \N}$ is a decreasing sequence, let $n \in \N$, we need to show that $F_{n + 1} \subseteq F_{n}$. Let $k \in \N$ be the unique natural number with $n_k \le n_0 + n < n_{k + 1}$. If $n_0 + n + 1 < n_{k + 1}$ then $F_{n + 1} = X_{n_0 + n + 1}^{\gamma_{n_k}}$, showing that $F_{n + 1} = X_{n_0 + n + 1}^{\gamma_{n_k}}\subseteq X_{n_0 + n}^{\gamma_{n_k}} = F_n$ using that $\gamma_{n_0 + n + 1} > \gamma_{n_k}$ provided by Claim \ref{c:prop of n_k}, and \eqref{p:X_n^al < X_n-1^al} of Claim \ref{c:basic properties}. If $n_0 + n + 1 = n_{k + 1}$ then $F_{n + 1} = X_{n_{k + 1}}^{\gamma_{n_{k + 1}}} \subseteq X_{n_{k + 1} - 1}^{\gamma_{n_{k + 1}}} = X_{n_0 + n}^{\gamma_{n_{k + 1}}} \subseteq X_{n_0 + n}^{\gamma_{n_k}} = F_n$ using \eqref{p:X_n^al < X_n-1^al} and \eqref{p:al < be -> X_n^al > X_n^be} of Claim \ref{c:basic properties}. 
  
  Now we show that $o_f(F_n) \to 0$. Since $(F_n)_{n \in \N}$ is decreasing, it is enough to show that $o_f(F_{n_k - n_0}) \to 0$ by \eqref{e:F < F' -> o_f(F) <= o_f(F')}. For every $k \in \N$ one can easily show by induction that $o_f(F_{n_k - n_0}) = o_f(X_{n_k}^{\gamma_{n_k}}) =  o_f(X_{n_{k + 1} - 1}^{\gamma_{n_{k}}})$, using the properties of $(n_k)_{k \in \N}$ provided by Claim \ref{c:prop of n_k} and \eqref{p:X_n^al < X_n-1^al} of Claim \ref{c:basic properties}. Thus, $o_f(F_{n_k - n_0}) = o_f(X_{n_k}^{\gamma_{n_k}}) = o_f(X_{n_{k + 1} - 1}^{\gamma_{n_{k}}}) \ge o_f(X_{n_{k + 1} - 1}^{\gamma_{n_{k + 1}}}) > o_f(X_{n_{k + 1}}^{\gamma_{n_{k + 1}}}) = o_f(F_{n_{k + 1} - n_0})$, using \eqref{p:al < be -> X_n^al > X_n^be} and \eqref{p:X_n^al < X_n-1^al} of Claim \ref{c:basic properties}. Hence, using also that the range of $o_f$ is $R$ by \eqref{e:o def}, $o_f(F_{n_k - n_0}) \to 0$ as $k \to \infty$, thus $o_f(F_n) \to 0$. Moreover, we also see that 
  \begin{equation}
    \label{e:o_f(F_n) > 0 1b}
    \text{$o_f(F_n) > 0$ for every $n$,}
  \end{equation}
  using again that $(F_n)_{n \in \N}$ is decreasing and \eqref{e:F < F' -> o_f(F) <= o_f(F')}, hence clearly $F_n \neq \emptyset$ for every $n \in \N$. 
   
  It remains to show that $y_{n_0 + n}$ satisfies the conditions of Lemma \ref{l:o_f(F_n) -> 0, F_n -> x}. Let $n$ be fixed and let $k$ be the unique natural number with $n_k \le n_0 + n < n_{k + 1}$. Since $\gamma_{n_0 + n} \ge \gamma_{n_k}$, if $y_{n_0 + n}$ was chosen according to Case (a) or Case (b) then $y_{n_0 + n} \in f(X_{n_0 + n}^{\gamma_{n_k}}) = f(F_n)$, using again that $X_{n_0 + n}^{\gamma_{n_k}} = F_n \neq \emptyset$. If $y_{n_0 + n}$ was chosen according to Case (c) and $n \ge 1$ then $y_{n_0 + n} = y_{n_0 + n - 1}$. Thus the conditions of Lemma \ref{l:o_f(F_n) -> 0, F_n -> x} are satisfied, hence $y_n \to f(x)$. 
  
  Note that Case (1a) and Case (1b) covers all subcases of Case (1), hence it remains to show that $y_n \to f(x)$ even in the following case.
  
  \noindent{\bf Case (2):} \emph{$\gamma_n = \omega_1$ for all, but finitely many $n$.} Let $m \in \N$ be large enough so that 
  \begin{equation}
    \label{e:(2) gamma_n = om_1 if n >= m}
    \text{$\gamma_n = \omega_1$ for every $n \ge m$.}
  \end{equation}
  From this fact using also \eqref{p:al < be -> X_n^al > X_n^be} and \eqref{p:X_n^al < X_n-1^al} of Claim \ref{c:basic properties} one can easily show first by induction on $j$ and then on $\beta$ that
  \begin{equation}
    \label{e:(2) X_i^al > X_j^be}
    m \le i \le j, \alpha \le \beta \Rightarrow X_i^\alpha \supseteq X_j^\beta.
  \end{equation}
  It follows easily that if $m \le i \le j$ then $\lambda_i \ge \lambda_j$, hence, using that the ordinal numbers are well-ordered there exists $\lambda < \omega_1$ and $M \ge m$ such that 
  \begin{equation}
    \label{e:(2) lam_n = lam if n >= M}
    n \ge M \Rightarrow \lambda_n = \lambda. 
  \end{equation}
  \begin{claim}
    \label{c:lambda is successor}
    $\lambda$ is successor.   
  \end{claim}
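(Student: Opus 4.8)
The plan is to argue by contradiction, ruling out that $\lambda$ is a limit ordinal. First note that $\lambda\ge 1$: the ball $X_M^0=\overline{B}(x_M,2^{1-M})$ contains $x_M$, so it is nonempty and hence $\lambda=\lambda_M\ge 1$; thus it suffices to show $\lambda$ is not limit. Assume then that $\lambda$ is limit. The heart of the argument is to show that the limit point $x$ of the sequence $(x_n)_{n\in\N}$ survives every derivative strictly below $\lambda$, that is,
\[
x\in X_i^\alpha \quad\text{for every }\alpha<\lambda\text{ and every }i\ge M.
\]
Granting this, fix any $i\ge M$; since $\lambda$ is limit we get $x\in\bigcap_{\alpha<\lambda}X_i^\alpha=X_i^\lambda$, so $X_i^\lambda\neq\emptyset$. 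But $\lambda_i=\lambda$ by \eqref{e:(2) lam_n = lam if n >= M}, which means exactly that $X_i^\lambda=\emptyset$, a contradiction. Hence $\lambda$ must be a successor.

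To establish $x\in X_i^\alpha$, I would \emph{fix} $\alpha<\lambda$ and study the sequence $(X_i^\alpha)_{i\ge M}$, holding $\alpha$ fixed while letting $i$ vary. Applying \eqref{e:(2) X_i^al > X_j^be} with $\beta=\alpha$ shows this sequence is decreasing in $i$; each term is closed; and each term is nonempty, since $\alpha<\lambda=\lambda_i$ for $i\ge M$. The crucial point is the diameter bound from \eqref{p:diameter} of Claim \ref{c:basic properties}: $\diam(X_i^\alpha)\le 2^{2-i}\to 0$. Therefore, using completeness of $d_X$, the Cantor intersection theorem gives that $\bigcap_{i\ge M}X_i^\alpha$ is a single point $p$. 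Finally $p=x$: since $p\in X_i^\alpha\subseteq\overline{B}(x_i,2^{1-i})$ for every $i\ge M$, we have $d_X(p,x_i)\le 2^{1-i}\to 0$, whence $p=\lim_i x_i=x$ (compare \eqref{e:x in B(x_n,...)}). Thus $x\in X_i^\alpha$ for all $i\ge M$, as required.

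The step I expect to require the most care—and the only genuinely non-routine one—is recognizing that one cannot apply the Cantor intersection theorem directly to $\bigcap_{\alpha<\lambda}X_i^\alpha$ for a single fixed $i$: along that transfinite chain the diameters are only bounded by $2^{2-i}$ and need not shrink, so completeness yields nothing. The remedy is precisely the order-swap described above. For each fixed $\alpha$ one intersects over $i$, where the diameters \emph{do} tend to $0$, pinning the common point down to $x$; only afterwards does one intersect over $\alpha$ to reach the contradiction. This is exactly where the coherence relation \eqref{e:(2) X_i^al > X_j^be} between the derived balls for different indices $i$, which is special to Case (2), becomes indispensable.
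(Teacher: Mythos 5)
Your proposal is correct and follows essentially the same argument as the paper: for each fixed $\alpha<\lambda$ you intersect the decreasing sequence $(X_i^\alpha)_{i\ge M}$ over $i$, using the diameter bound and completeness to pin the intersection down to $\{x\}$, and then conclude that if $\lambda$ were limit, $X_i^\lambda=\bigcap_{\alpha<\lambda}X_i^\alpha$ would contain $x$ and hence be nonempty, contradicting $\lambda_i=\lambda$. The only differences are cosmetic (the explicit remark that $\lambda\ge 1$ and the contradiction framing from the outset).
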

  \begin{proof}
    We first show that $x \in X_n^\alpha$ for every $n \ge M$ and $\alpha < \lambda$. Let $\alpha < \lambda$ be fixed. Using \eqref{e:(2) X_i^al > X_j^be} and the fact that $\diam(X_n^\alpha) \le \diam(X_n^0) = \diam(\overline{B}(x_n, 2^{1 - n})) \le 2^{2 - n}$, $(X_n^\alpha)_{n \ge M}$ is a decreasing sequence of closed sets with $\diam(X_n^\alpha) \to 0$. They are nonempty using \eqref{e:(2) lam_n = lam if n >= M} and that $\alpha < \lambda$, hence, there is a unique $x^\alpha \in X$ with $\{x^\alpha\} = \bigcap_{n \ge M} X_n^\alpha$. Using again the fact that $X_n^\alpha \subseteq X_n^0 = \overline{B}(x_n, 2^{1 - n})$, $d_X(x^\alpha, x) \le 2^{2 - n}$ for every $n \in \N$, hence $x = x^\alpha$.
    
    Now the Claim follows, as for any $n \ge M$, if $\lambda$ is limit then $\emptyset = X_n^{\lambda_n} = X_n^\lambda = \bigcap_{\alpha < \lambda} X_n^\alpha \supseteq \{x\} \neq \emptyset$, a contradiction.
  \end{proof}
  
  Let $\lambda = \alpha + 1$, hence 
  \begin{equation}
    \label{e:(2) lam = lam_n = alpha + 1}
    \lambda_n = \lambda = \alpha + 1 \text{ for every $n \ge M$}.
  \end{equation}
  We use Lemma \ref{l:o_f(F_n) -> 0, F_n -> x} again to prove that $y_n \to f(x)$ with $X_{M + n}^\alpha$ in place of $F_n$ and $y_{M + n}$ in place of $y_n$. We first check that the conditions of the lemma hold. \eqref{e:(2) X_i^al > X_j^be} and the fact that $M \ge m$ shows that $(F_n)_{n \in \N} = (X_{M + n}^\alpha)_{n \in \N}$ is a decreasing sequence of closed sets. \eqref{e:(2) lam = lam_n = alpha + 1} implies that each $F_n$ is non-empty. Since $D(X_n^\alpha) = X_n^\lambda = \emptyset$, $o_f(X_n^\alpha) = 0$ by \eqref{e:F not empty D(F) empty -> o_f(F) = 0}. The fact that $F_n \to x$ follows easily from from the construction. 
  
  It is clear that each $y_{M + n}$ was chosen according to Case (a), hence $y_{M + n} \in f(X_{M + n}^\alpha) = f(F_n)$, showing that the conditions of Lemma \ref{l:o_f(F_n) -> 0, F_n -> x} are satisfied. The conclusion of the lemma ensures that $y_n \to f(x)$, completing the analysis of Case (2). Thus, the proof of the first assertion of the theorem is complete. 
  
  \begin{remark}
    \label{r:infinitely many times Case (a) or (b)}
    Note that in both cases, infinitely many of the points $y_n$ was chosen according to Case (a) or Case (b). 
  \end{remark}
  
  It remains to show that if $f$ is not of Baire class 1, then Player I has a winning strategy. We use Baire's theorem again, that states that a function is of Baire class 1 if and only if it has a point of continuity restricted to every non-empty closed set (see e.g. \cite[Theorem 24.15]{K}). Hence, there is a non-empty closed set $F \subseteq X$ such that $f\restriction F$ is not continuous at any point of $F$. Then $\osc_{f \restriction F} (x) > 0$ for every $x \in F$ by \eqref{e:osc = 0 <=> continuous}, hence $F = \bigcup_n F_n$, where $$F_n = \bigcup_n \left\{x \in F : \osc_{f \restriction F}(x) \ge \frac{1}{n}\right\}.$$ Using \eqref{e:set with large osc is closed}, $F_n$ is closed for every $n$. Baire's category theorem implies that there exists $n \in \N$ such that $F_n$ is dense in an open portion of $F$. Let us fix such an $n$, then using that $F_n$ is closed, there exists $U \subseteq X$ open with $\emptyset \neq U \cap F = U \cap F_n \subseteq F_n$. Let $C$ be the closure of $U \cap F_n$ and let $\varepsilon = \frac{1}{n}$. Then $C \subseteq F_n$. We first show that 
  \begin{equation}
    \label{e:osc C >= eps}
    \text{$\osc_{f \restriction C}(x) \ge \varepsilon$ for every $x \in C$.}
  \end{equation}
  Indeed, if $x \in U \cap F_n$ then one can easily see that the oscillation of $x$ is independent of the values of $f$ outside $U$, hence $\osc_{f\restriction C}(x) = \osc_{f \restriction U \cap F_n}(x) = \osc_{f \restriction U \cap F}(x) = \osc_{f \restriction F}(x) \ge \varepsilon$ using that $U \cap F = U \cap F_n$ and that $\varepsilon = \frac{1}{n}$. Now using that $\{x \in C : \osc_{f \restriction C} \ge \varepsilon\}$ is closed by \eqref{e:set with large osc is closed}, it necessarily contains $C$, showing \eqref{e:osc C >= eps}. 
  
  Now we construct a strategy for Player I. Let Player I play an arbitrary element $x_0 = x^0 \in C$. Then Player I plays $x_0 = x_1 = \dots = x^0$ until Player II first plays an elements $y_{n} \in  B(f(x^0), \varepsilon / 7)$, where $B(f(x^0), \varepsilon / 7)$ denotes the open ball $\{y \in Y : d_Y(y, f(x^0)) < \varepsilon / 7)\}$. So let $n_0$ be the smallest natural number with $y_{n_0} \in B(f(x^0), \varepsilon / 7)$, if such a number exists. If no such number exists then Player I plays $x_n = x^0$ at every step of the game and Player II plays a sequence $(y_n)_{n \in \N}$ with $y_n \not \to f(x)$. So we can suppose that at some point, Player II plays $y_{n_0} \in B(f(x^0), \varepsilon / 4)$. Then Player I responds with $x_{n_0 + 1} = x^1 \in B(x^0, 2^{-n_0}) \cap C$ and $d_Y(f(x^1), f(x^0)) \ge \varepsilon\cdot\frac{3}{7}$. Such an element $x^1$ exists using \eqref{e:osc C >= eps} and the definition of the oscillation, \eqref{e:def of osc}. 
  
  Now let Player I play $x^1$ until Player II plays elements outside of the ball $B(f(x^1), \varepsilon / 7)$. So let $n_1$ be the smallest natural number with $y_{n_1} \in B(f(x^1), \varepsilon / 7)$, if such a number exists. If no such number exists then Player I plays $x^1$ indefinitely, with Player II playing a sequence $y_n$ with $y_n \not \to f(x^1)$. Hence, we can suppose that such an index $n_1$ exists. Then we note that from $d_Y(f(x^1), f(x^0)) \ge \varepsilon\cdot\frac{3}{7}$, $y_{n_0} \in B(f(x^0), \varepsilon / 7)$ and $y_{n_1} \in B(f(x^1), \varepsilon / 7)$ it follows that $d_Y(y_{n_0}, y_{n_1}) \ge \varepsilon / 7$. Now we pick $x^2 = x_{n_1 + 1}$ with $x^2 \in B(x^1, 2^{-n_1}) \cap C$ and $d_Y(f(x^2), f(x^1)) \ge \varepsilon\cdot\frac{3}{7}$. Again, the existence of such $x^2$ is ensured by \eqref{e:osc C >= eps}. Iterating the construction, either at some point $k$, when Player I plays $x^k = x_{n_{k - 1} + 1} = x_{n_{k - 1} + 2} \dots$ Player II plays elements $y_{n_{k - 1} + 1}, y_{n_{k - 1} + 2}, \dots  \not \in B(f(x^k), \varepsilon / 7)$ and loses, or an infinite sequence $(n_k)_{k \in \N}$ is constructed with $d_Y(y_{n_k}, y_{n_{k + 1}}) \ge \varepsilon / 7$ for every $k \in \N$, meaning that Player II loses in this case also, finishing the proof of the second assertion of the theorem. Thus, the proof of the theorem is complete. 
\end{proof}

\section{First return recoverable functions}
\label{s:first return}

In this section we reprove the following theorem from the theory of first return recoverable functions, that also follows from a result of Lecomte \cite[Theorem 8]{Lecomte}. 

\begin{theorem}
  \label{t:first return}
  Let $X$ be a zero-dimensional Polish space. Then there is a compatible, complete metric $d_X$ on $X$ with the following property: Every Baire class 1 function $f : X \to Y$, $Y$ a Polish space, is first return recoverable.
\end{theorem}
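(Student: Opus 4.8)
The plan is to take $d_X$ to be any complete, compatible ultrametric on $X$ (one exists because $X$ is zero-dimensional Polish), possibly normalized so that its nonzero values form a decreasing sequence; this $d_X$ depends only on $X$, while the witnessing trajectory will be allowed to depend on $f$. For any ultrametric the closed balls $\overline{B}(x,2^{1-n})$ of a fixed radius partition $X$ into a clopen family, and these partitions refine as $n$ grows, each level-$n$ ball sitting inside a unique level-$(n-1)$ ball. Fixing a Baire class 1 function $f\colon X\to Y$, Theorem \ref{t:main} gives Player II a winning strategy in $G_f$, and since $d_X$ is an ultrametric, Remark \ref{r:strategy for ultrametric} applies: for every level $n$ and every level-$n$ ball $B$ the strategy's Case (a), (b) or (c) is determined by $B$ alone, and in Cases (a) and (b) there is a point $x_{B,n}\in B$ with the strategy playing $y_n=f(x_{B,n})$.

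The key reduction is to feed a constant play into this strategy. For $s\in X$ write $B_n(s)$ for the level-$n$ ball containing $s$, and consider Player I playing $x_n\equiv s$, which is legal since $d_X(s,s)=0$. The responses satisfy $z_n\to f(s)$, where $z_n=f(x_{B_n(s),n})$ at Case (a)/(b) levels and $z_n=z_{n-1}$ at Case (c) levels, and by Remark \ref{r:infinitely many times Case (a) or (b)} infinitely many levels are of type (a)/(b). Hence, writing $N_s=\{n:B_n(s)\text{ is Case (a)/(b)}\}$, the subsequence $f(x_{B_n(s),n})$ over $n\in N_s$ converges to $f(s)$. The entire task thus becomes the construction of a dense trajectory whose first return route $(s_j)_j$ to each $s$ eventually coincides with the designated points $x_{B_n(s),n}$ for levels $n\to\infty$ through $N_s$; granting that, $f(s_j)\to f(s)$ is immediate.

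First I would build the trajectory $(t_k)_k$ by a priority enumeration of $\bigcup_n\mathcal{P}_n$, where $\mathcal{P}_n$ is the level-$n$ partition. The designated points $x_{B,n}$ of the Case (a)/(b) balls are placed so that each is the first trajectory element of its ball $B$, giving coarser (ancestor) balls strictly higher priority than finer ones; a fixed countable dense subset of $X$ is interleaved with lowest priority, which secures density without disturbing any ``first element'' commitment, since in an ultrametric extra points can always be appended after the committed first element of every ball. Because the first return route to $s$ is, by definition, a subsequence of the points ``first element of $B_n(s)$'', one then tracks it against the convergent sequence $(z_n)$ and reads off $f(s_j)\to f(s)$ from the key fact above.

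The hard part will be the alignment in the previous paragraph. The designated point $x_{B,n}$ lies in a single level-$(n+1)$ sub-ball of $B$, so it competes with that sub-ball's own designated point, and for $s$ in the remaining sub-balls one must still guarantee that the route meets designated points of balls containing $s$; moreover the Case (c) levels carry no designated point, so the route must be shown either to skip them or to inherit the value dictated by the identity $z_n=z_{n-1}$. The resolution I would pursue rests on the observation that the active value changes only at the cofinal set $N_s$ of Case (a)/(b) levels, so that along any first return route the $f$-values track $(z_n)$ irrespective of the intervening Case (c) passages; one also chooses the designated points coherently along maximal Case (c) chains so that the inherited value is actually realized inside the relevant sub-balls. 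Making this bookkeeping precise is essentially a first-return reformulation of the case analysis in the proof of Theorem \ref{t:main}, again powered by Lemma \ref{l:o_f(F_n) -> 0, F_n -> x}, and I expect it to be the only genuinely delicate step.
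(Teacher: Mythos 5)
Your setup and first reduction coincide with the paper's: the same discretely-valued complete ultrametric (note, though, that the normalization is \emph{not} optional --- by Lecomte's example cited in the introduction, an arbitrary complete compatible ultrametric on a Polish space can carry a non-recoverable Baire class 1 function), the designated points $x_{B,n}$ supplied by Remark \ref{r:strategy for ultrametric}, and the observation that feeding the constant play $x_n \equiv s$ into the winning strategy gives $f(x_{B_n(s),n}) \to f(s)$ along the Case (a)/(b) levels, which are infinite by Remark \ref{r:infinitely many times Case (a) or (b)}. The genuine gap is exactly the step you defer: the Case (c) levels. Neither of your proposed resolutions works. The route cannot be made to ``skip'' those levels: by definition it visits the first trajectory element of every ball $B(s, d_X(s,s_j))$, and if a Case (c) ball $B_n(s)$ carries no designated point of its own, that first element is either a designated point of some sub-ball of $B_n(s)$ \emph{not containing} $s$ --- whose $f$-value is dictated by the strategy along plays entering that sub-ball and is unrelated to $f(s)$ --- or one of your interleaved dense points, whose $f$-values are uncontrolled. (Moreover ``lowest priority'' is not implementable for an infinite dense set: each element of an $\N$-indexed trajectory has only finitely many predecessors, so every interleaved point precedes all but finitely many designated points and can become the first element of infinitely many balls.) Your other suggestion, choosing designated points ``so that the inherited value is actually realized inside the relevant sub-balls,'' is impossible in general: if $B$ is a Case (c) ball and $x_{B',m}$ is its nearest originally picked ancestor point, nothing forces $f(B)$ to contain, or even approach, the inherited value $f(x_{B',m})$; e.g.\ $x_{B',m}$ may be the unique point where $f$ is large while $f$ vanishes identically on $B$.

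The paper's resolution is different, and it is the essential content of the proof. One designates a point $x_{B,n}$ in \emph{every} ball of every level, not just the Case (a)/(b) ones: for a Case (c) ball $B$, set $x_{B,n} = x_{B',m}$ if $x_{B',m} \in B$, and otherwise choose $x_{B,n} \in B$ with
\begin{equation*}
  d_Y\bigl(f(x_{B',m}), f(x_{B,n})\bigr) < \frac{1}{n} + \inf\bigl\{d_Y\bigl(f(x_{B',m}), f(x')\bigr) : x' \in B\bigr\},
\end{equation*}
a \emph{near-minimization} rather than a realization. Enumerating exactly these points coarsest-first makes the trajectory dense (no auxiliary dense set is needed: by Remark \ref{r:infinitely many times Case (a) or (b)} every ball contains designated points of arbitrarily deep levels) and pins the first return route to $s$ down to the designated points of the balls containing $s$. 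Convergence at the Case (c) route points then follows from the triangle inequality: since $s \in B$, the infimum above is at most $d_Y(f(x_{B',m}), f(s))$, hence $d_Y(f(x_{B,n}), f(s)) \le \frac{1}{n} + 2\, d_Y(f(x_{B',m}), f(s))$, and the last term is eventually small because $x_{B',m}$ is an originally picked route point. Without this near-minimizing choice (or an equivalent device), the alignment you call ``bookkeeping'' cannot be carried out, so the proposal as it stands does not prove the theorem.
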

\begin{proof}
  Let $d_X$ be a compatible, complete metric on $X$ such that the range of $d_X$ is contained in $\{2^{1 - n} : n \in \N\} \cup \{0\}$. Such a metric can be obtained using the fact that $X$ is homeomorphic to a closed subspace of the Baire space (see e.g.~\cite[Theorem 7.8]{K}). 
  
  To show that $d_X$ works, let $f : X \to Y$ be a Baire class 1 function, where $Y$ is an arbitrary Polish space. Player II has a winning strategy in the game $G_f$ using Theorem \ref{t:main}. Using the strategy, the fact that $(X, d_X)$ is an ultrametric space and Remark \ref{r:strategy for ultrametric}, we pick a point $x_{B, n} \in B$ for every $n \in \N$ and closed ball $B$ of radius $2^{1 - n}$ in two steps. Let $n \in \N$ and a closed ball $B = \overline{B}(x_n, 2^{1 - n})$ of radius $2^{1 - n}$ be given, and suppose that the strategy of Player II, as given in the proof of Theorem \ref{t:main}, defines its move for the $n$th step, after I played $x_n$, according to Case (a) or Case (b). Note that by Remark \ref{r:strategy for ultrametric} this depends only on $n$ and $B$, and in these two cases, we can pick $x_{B, n} \in B$ such that playing $f(x_{B, n})$ respects the winning strategy of II. If $n = 0$ and $B = X$, we pick an arbitrary point $x_{X, 0}$ if we have not done so already. The points picked so far will be referred to as \emph{originally picked}. 
  
  If for some $n$ and $B$ we have not picked a point $x_{B, n}$ already, we do so in the following way. Let $d_Y$ be a compatible metric for $Y$. Let $m < n$ be the largest so that if $B'$ is unique closed ball of radius $2^{1 - m}$ containing $B$ then a point $x_{B', m}$ was picked originally. Now, if $x_{B', m} \in B$ then we let $x_{B, n} = x_{B', m}$, otherwise we choose an arbitrary point $x_{B, n} \in B$ such that $d_Y(f(x_{B', m}), f(x_{B, n})) < \frac{1}{n} + \inf\{d_Y(f(x_{B', m}), f(x')) : x' \in B\}$. 
  
  As pointed out in Remark \ref{r:strategy for ultrametric}, this way we picked only countably many points. Let us order these in a sequence $(x_n)_n$ in a way that if $x_i$ corresponds to $x_{B, n}$ and $x_j$ corresponds to $x_{B', n'}$ with $B \subseteq B'$ and $n > n'$ then $i > j$. 
  
  It is clear that $(x_n)_n$ is dense in $X$. To prove that this trajectory witnesses that $f$ is first return recoverable, let $s \in X$ be arbitrary. Using the notation of the Introduction, we need to show that $f(s_n) \to f(s)$.
  It is clear, using the ordering of the trajectory and the fact that the range of $d_X$ is $\{2^{1 - n} : n \ge 0\} \cup \{0\}$, that $\{s_n : n \in \N\} = \{x_{\overline{B}(s, 2^{1 - n}), n} : n \in \N\}$. 
  
  Let $n_k$ be the sequence that lists (in a strictly increasing order) the set $\{n : s_n$ was originally picked$\}$. It follows from Remark \ref{r:infinitely many times Case (a) or (b)} that for infinitely many $n$ the point $s_n$ was originally picked, hence $n_k$ is an infinite sequence. Moreover, since the strategy for Player II is winning, $f(s_{n_k}) \to f(s)$. Let $\varepsilon > 0$ be fixed, and let $N$ be large enough so that $d_Y(f(s_{n_k}), f(s)) < \varepsilon$ if $n_k \ge N$, and we also suppose that $N = n_k$ for some $k$. Now let $n > N$ be arbitrary towards showing that $d_Y(f(s_{n}), f(s)) < \frac{1}{n} + 2\varepsilon$. If $s_{n}$ was originally picked then we are done, so let us suppose otherwise and choose $k$ so that $N \le n_k < n < n_{k + 1}$, and choose $B$, $\ell$ so that $x_{B, \ell} = s_n$. Then $d_Y(f(s_{n}), f(s)) \le d_Y(f(s_n), f(s_{n_k})) + d_Y(f(s_{n_k}), f(s)) \le \frac{1}{n} + d_Y(f(s_{n_k}), f(s)) + d_Y(f(s_{n_k}), f(s)) < \frac{1}{n} + 2\varepsilon$, using the fact that $s, s_n \in B$ and our strategy to pick the point $x_{B, \ell}$ from $B$. Therefore $f(s_n) \to f(s)$, and the proof is complete. 
\end{proof}

\subsection*{Acknowledgements}
The author wishes to thank Hugo Nobrega for introducing him to the eraser game and S\l{}awomir Solecki for suggesting to investigate the connection of the game and first return recoverability. The author is greatly indebted to M\'arton Elekes for numerous helpful discussions and would like to thank Lilla T\'othm\'er\'esz for useful remarks about the manuscript.

\bigskip


\begin{thebibliography}{HD}
\normalsize
\baselineskip=17pt
\bibitem{Andretta}
  A. Andretta, \emph{More on Wadge determinacy}, Ann. Pure Appl. Logic {144} (2006), no. 1-3, 2--32.
\bibitem{Carroy}
  R.~Carroy, \emph{Playing in the first Baire class}, MLQ Math. Log. Q. {60} (2014), no. 1-2, 118--132. 
\bibitem{DE}
  U.~B.~Darji, M.~J.~Evans, \emph{Recovering Baire $1$ functions} Mathematika {42} (1995), no. 1, 43--48.
\bibitem{DEO}
  U.~B.~Darji, M.~J.~Evans, R.~J.~O'Malley, \emph{A first return characterization for Baire-one functions}, Real Anal. Exchange {19} (1993/94), no. 2, 510–515.
\bibitem{DS}
  J.~Duncan, S. Solecki, \emph{Recovering Baire one functions on ultrametric spaces}, Bull. Lond. Math. Soc. {41} (2009), no. 4, 747--756. 
\bibitem{Duparc}
  J.~Duparc, \emph{Wadge hierarchy and Veblen hierarchy. I. Borel sets of finite rank}, J. Symbolic Logic {66} (2001), no. 1, 56--86. 
\bibitem{K}
  A.~S.~Kechris, \emph{Classical Descriptive Set Theory}, 
  Graduate Texts in Mathematics {156}, Springer-Verlag, New York, 1995.
\bibitem{Lecomte}
  D.~Lecomte, \emph{How can we recover Baire class one functions?}, Mathematika {50} (2003), no. 1-2, 171--198.
\bibitem{Motto Ros}
  L.~Motto Ros, \emph{Game representations of classes of piecewise definable functions}, MLQ Math. Log. Q. {57} (2011), no. 1, 95--112.
\bibitem{Baire_xi_games}
  H.~Nobrega, \emph{Games for functions of a fixed Baire class}, preprint.
\bibitem{Semmes}
  B. Semmes, \emph{Games, trees, and Borel functions} Ph.D. thesis, ILLC, University of Amsterdam, Holland, 2009.
\bibitem{WadgePHD} 
  W.~W.~Wadge, \emph{Reducibility and determinateness on the Baire space}, Ph.D. Thesis, University of California, Berkeley, 1983.
\end{thebibliography}
\end{document}